\newtheorem{theorem}{Theorem}[section]
\newtheorem{lemma}[theorem]{Lemma}
\newtheorem{cor}[theorem]{Corollary}
\theoremstyle{definition}
\theoremstyle{remark}
\newtheorem{remark}[theorem]{Remark}
\def\ie{\emph{i.e.}}
\gdef\mnote#1{\marginpar{\footnotesize
 \tolerance\@M\spaceskip2.6\p@ plus10\p@ minus.9\p@\rm#1}}}
\def\Dg:{\endgraf{\bf Dg:\enspace}\ignorespaces}
\let\Bbb\mathbb
\let\Cal\mathcal
\newcommand{\be}{\begin{equation}}
\newcommand{\ee}{\end{equation}}
\let\ge\geqslant 
\let\le\leqslant 
\def\Z{\Bbb Z}
\def\R{\Bbb R}
\def\C{\Bbb C}
\def\Q{\Bbb Q}
\numberwithin{equation}{section}
\begin{document}

\title[Anti-symplectic involutions on rational symplectic 4-manifolds]
{Anti-symplectic involutions on rational symplectic 4-manifolds}


\author{V.~Kharlamov}
\address{Universit\'{e} de Strasbourg et IRMA (CNRS),
7 rue
Ren\'{e}-Descartes, 67084 Strasbourg Cedex, France}
\email{kharlam@math.unistra.fr}

\author{V.~Shevchishin}
\address{University of Warmia and Mazury,
ul.~Słoneczna 54, 10-710 Olsztyn, Poland}
\email{vsevolod@matman.uwm.edu.pl}

\subjclass[2010]{Primary 57R17, 53D99, 14J26, 14P99}

\date{}

\begin{abstract}
This is an expanded version of the talk given by the first author at the conference "Topology, Geometry, and Dynamics: Rokhlin – 100". 
The purpose of this talk was to explain our current results on classification of rational symplectic 4-manifolds equipped with an anti-symplectic involution. 
Detailed exposition will appear elsewhere.
\end{abstract}

 \dedicatory{Dedicated to the memory of V.A.~Rokhlin}
\maketitle


\section{Introduction} 
This paper deals with such questions as classifying rational symplectic 4-mani\-folds equipped with an anti-symplectic involution and existence of an equivariant K\"ahler structure on these manifolds.

Our motivation was twofold. Firstly, it is a natural, necessary, step in generalizing real algebraic geometry 
achievements on the range of problems in the spirit of Hilbert's 16th problem into the realm of symplectic geometry; 
especially, in what concerns the study of interactions between topological and deformation equivalence invariants. Secondly, 
it is very closely connected with the contemporary study of real analogs of Gromov-Witten invariants like those discovered by
J.-Y.~Welschinger \cite{W} in early 2000s.

By a {\it rational symplectic manifold} we mean a symplectic manifold which can be obtained by a sequence of symplectic blow-ups and blow-downs from $\C P^2$ and $\C P^1\times \C P^1$ equipped with their standard symplectic structures $\lambda \omega_{FS}$ and $\lambda \omega_{FS}\times \mu\omega_{FS} $, 
where $\omega_{FS}$ states for symplectic structures given by Fubini-Study metrics (see \cite{McD-P} for the definition and basic properties of symplectic blow-ups and blow-downs). 
As is known \cite{McD}, the rational symplectic 4-manifolds have no other blow-down minimal models than
$(\C P^2, \lambda \omega_{FS})$ and $(\C P^1\times \C P^1, \lambda \omega_{FS}\times \mu\omega_{FS}) $, and, furthermore, each rational symplectic 4-manifold can be obtained from a minimal model by a simultaneous blow-up of a finite collection of disjoint embedded balls.

By an {\it anti-symplectic involution} on a symplectic manifold $(X,\omega)$ we call a
diffeomorphism $c :X\to X$ such that $c^2=\operatorname {id}_X$ and $c^*\omega=-\omega$. We name such a triple $(X,\omega, c)$
a {\it real symplectic manifold} and say
that it is {\it K\"ahler} if there exists an
integrable complex structure $J$ 
with respect to which $c$ is an anti-holomorphic involution ({\it a real structure}) and
$\omega$ a K\"ahler form on $(X,J)$. If $(X,\omega, c)$ does not admit any $c$-equivariant blow-down, we call it {\it $c$-minimal.}

We say that $(X, \omega_X, c_X)$ is {\it $c$-equivariant deformation equivalent} to $(Y, \omega_Y, c_Y)$, if
there exists a chain of triples $(X_i,\omega_i, c_i)$, $i=0,\dots, k$, such that $(X, \omega_X, c_X)$ is isomorphic to 
$(X_0,\omega_0, c_0)$, $(Y, \omega_Y, c_Y)$ to $(X_k,\omega_k, c_k)$, and $(X_i,\omega_i, c_i)$, for each $1\le i \le k$, is isomorphic
to a triple obtained by a smooth $c_{i-1}$-equivariant variation $\omega_{i-1}(t)$ of $\omega_{i-1}$ on $X_{i-1}$.

Our first result is concerned with existence of K\"ahler structure.

\begin{theorem}\label{main}  Every real rational symplectic 4-manifold $(X,\omega, c)$ is $c$-equivari\-ant deformation equivalent to a K\"ahler one.
If, in addition, $(X,\omega, c)$ is $c$-minimal, then $(X,\omega, c)$ itself is K\"ahler.
\end{theorem}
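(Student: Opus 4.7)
The plan is to proceed by reduction to $c$-minimal models, in the spirit of the real algebraic classification of rational surfaces. First, I would produce a $c$-equivariant symplectic blow-down that reduces an arbitrary real rational $(X,\omega,c)$ to a $c$-minimal triple $(X_0,\omega_0,c_0)$. Second, I would classify these $c$-minimal triples up to equivariant symplectomorphism and show that each is K\"ahler, which gives the second assertion of the theorem. Third, to obtain the deformation statement in the non-minimal case, I would reverse the blow-down by an equivariant ball-packing argument inside the K\"ahler minimal model.

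For the equivariant blow-down, choose an $\omega$-tame almost complex structure $J$ satisfying $dc\circ J=-J\circ dc$; such $J$ form a non-empty contractible family. If $(X,\omega,c)$ is not $c$-minimal, the Taubes--McDuff theory produces an embedded $J$-holomorphic exceptional sphere $E$. Using positivity of intersections and equivariant transversality one arranges that either $c(E)=E$ setwise (with $E$ itself $J$-holomorphic for a suitable choice of $J$) or $c(E)\cap E=\varnothing$. An equivariant symplectic neighbourhood theorem then allows one to blow down $E$, or $E\sqcup c(E)$, equivariantly, strictly decreasing $b_2$. Iteration yields a $c$-minimal $(X_0,\omega_0,c_0)$.

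Next I would classify the $c$-minimal models. By McDuff, the underlying $X_0$ is diffeomorphic to $\C P^2$ or $\C P^1\times\C P^1$. In the first case, the $c$-invariant pencil of $J$-lines identifies $c_0$ with the standard complex conjugation on $(\C P^2,\lambda\omega_{FS})$. In the second case, the two $c_0$-compatible $J$-holomorphic rulings force $c_0$ to be equivariantly symplectomorphic to one of the four standard anti-holomorphic involutions on $(\C P^1\times\C P^1,\lambda\omega_{FS}\times\mu\omega_{FS})$ (with real part $T^2$, $S^2$, $S^2\sqcup S^2$, or empty). Since each listed model is K\"ahler, the second assertion of the theorem follows.

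To handle the general case, I would reverse the blow-down inside the K\"ahler category. The centres of the equivariant blow-down in $X_0$ form a $c_0$-equivariant configuration of disjoint symplectic balls; by an equivariant version of McDuff's isotopy theorem for symplectic ball packings, this configuration may be connected through a smooth $c_0$-equivariant family to one whose centres lie in a suitable $c_0$-invariant locus and whose balls are standard. Equivariant symplectic blow-up along this family produces a smooth $c$-equivariant deformation of symplectic forms connecting $(X,\omega,c)$ to a K\"ahler real rational $4$-manifold. The main technical obstacle will be the equivariant refinements of McDuff's two cornerstone results---blow-down of exceptional symplectic spheres and isotopy of ball packings---in the presence of the involution, especially when exceptional spheres meet the fixed locus of $c$; this requires careful equivariant transversality and control of nodal degenerations compatible with $c$.
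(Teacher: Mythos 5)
There is a genuine gap, and it sits at the heart of the problem. Your classification of $c$-minimal models is wrong: you assert that a $c$-minimal $(X_0,\omega_0,c_0)$ must have $X_0$ diffeomorphic to $\C P^2$ or $\C P^1\times\C P^1$ ``by McDuff.'' But $c$-minimality is strictly weaker than minimality. McDuff's theorem applies to manifolds with \emph{no} exceptional spheres, whereas a $c$-minimal manifold may carry many exceptional spheres that simply cannot be blown down equivariantly. The obstruction is homological: for an exceptional curve $E$ with $E'=c(E)$, the intersection number $[E]\cdot[E']$ can take any value between $-1$ and $3$, and only the values $-1$ (where $E$ is $c$-invariant) and $0$ (where $E$ and $E'$ are disjoint) permit an equivariant blow-down. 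Your claim that ``positivity of intersections and equivariant transversality'' let you arrange $c(E)=E$ or $c(E)\cap E=\varnothing$ is false --- $[E]\cdot[E']$ is a homological invariant and no perturbation changes it. Consequently the correct list of $c$-minimal models also contains: real conic bundles with $2m$ singular fibers and $X^c=mS^2$ (arising when $[E]\cdot[E']=1$), and two further families with the homology of del Pezzo surfaces of degrees $2$ and $1$, with $X^c=4S^2$ and $X^c=4S^2\sqcup\R P^2$ respectively (arising when every exceptional curve has $[E]\cdot[E']\ge 2$). This mirrors Comessatti's classification of minimal real rational surfaces, where real conic bundles and real del Pezzo surfaces of degrees $1$ and $2$ appear alongside $\C P^2$ and quadrics. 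Your argument proves nothing for these cases, which are exactly the hard ones.

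The missing cases are where all the technical work lies, and none of it is routine. For the conic-bundle models one must show that two cohomologous $c$-anti-invariant symplectic forms taming ruling-compatible structures are equivariantly isomorphic; this requires an equivariant inflation/Nakai--Moishezon argument along a path of almost complex structures, and a specifically real difficulty appears: $c$-invariant $(-2)$-curves unavoidably occur at isolated parameter values, and one has to cut the path and insert local Atiyah-flop models (of two different real signatures, depending on whether $c$ has fixed points on the $(-2)$-sphere) to cross these walls. For the two del Pezzo-type models one reduces to the ruled case not by blow-ups at points but by $(-2)$- and $(-4)$-surgeries (rational blow-ups) performed on a Lagrangian sphere, respectively a Lagrangian $\R P^2$, inside $X^c$. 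Your outline for the non-minimal case (equivariant ball packing over the K\"ahler minimal model) is reasonable and consistent with the paper's strategy, but as it stands your proof of the second assertion --- that $c$-minimal implies K\"ahler --- only covers two of the five families of $c$-minimal models.
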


According to Kodaira embedding theorem, each compact K\"ahler surface is deformation equivalent to a projective one. The equivariant version of this theorem 
and $c$-equivariant deformation unicity of K\"ahler structures lead immediately to our next result.

\begin{theorem} Deformation classification of real rational symplectic 4-manifolds coincides with deformation classification of real rational algebraic surfaces. \qed
\end{theorem}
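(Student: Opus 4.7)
\medskip\noindent\emph{Proof proposal.} The plan is to build mutually inverse maps between the two sets of deformation classes, using Theorem~\ref{main}, the equivariant Kodaira embedding theorem, and the $c$-equivariant deformation unicity of K\"ahler structures invoked just before the statement. In the forward direction, to any real rational algebraic surface $(X,J,c)$ I would associate a real symplectic structure as follows: pick a $c$-equivariant projective embedding and pull back the Fubini--Study form to obtain a K\"ahler form $\omega$ with $c^*\omega = -\omega$. Since the $c$-invariant part of the K\"ahler cone is convex, hence connected, any two such $\omega$ are joined by a $c$-equivariant smooth path; and any real algebraic deformation of $(X,J,c)$ extends, by equivariant Hodge theory on the total space, to a $c$-equivariant smooth deformation of $\omega$. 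This descends to a well-defined map $\Phi$ on deformation classes.

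Conversely, starting from a real rational symplectic 4-manifold $(X,\omega,c)$, Theorem~\ref{main} produces a $c$-equivariant deformation of $\omega$ to a K\"ahler form $\omega_1$ admitting an integrable $c$-anti-holomorphic complex structure $J_1$. The equivariant Kodaira embedding theorem then deforms $(X,J_1,c)$, through compact real K\"ahler surfaces, to a real projective one $(X',J',c')$, and I would assign to $(X,\omega,c)$ the algebraic deformation class of $(X',J',c')$; this defines the candidate inverse $\Psi$. The identity $\Phi\circ\Psi=\mathrm{id}$ is tautological from the construction, while $\Psi\circ\Phi=\mathrm{id}$ reduces to the $c$-equivariant deformation unicity of K\"ahler forms on a fixed real rational surface, again a consequence of the convexity of the $c$-invariant K\"ahler cone.

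The principal obstacle lies in the well-definedness of $\Psi$: two $c$-equivariantly symplectic-deformation equivalent inputs must produce the same algebraic deformation class. Establishing this requires applying Theorem~\ref{main} and the equivariant Kodaira embedding \emph{parameter-wise} along the whole symplectic deformation path, together with an equivariant Kodaira--Spencer stability statement ensuring that projectivity persists under small $c$-equivariant K\"ahler deformations of a real K\"ahler surface. Packaging these equivariant refinements into a single continuous family of real projective surfaces over the parameter interval---so that one obtains an \emph{algebraic} deformation linking the two outputs---is the technical core of the argument. Once this is in place, the coincidence of the two classifications follows formally.
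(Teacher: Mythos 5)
Your proposal is correct and follows essentially the same route as the paper, which derives the statement immediately from Theorem~\ref{main} together with the equivariant Kodaira embedding theorem and the $c$-equivariant deformation unicity of K\"ahler structures --- exactly the three ingredients you organize into the maps $\Phi$ and $\Psi$. Your elaboration of the well-definedness issues merely spells out what the paper leaves implicit in its one-sentence justification.
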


As is proved in \cite{DK}, two rational real algebraic surfaces are deformation equivalent if, and only if, their complex conjugation involutions are diffeomorphic.
Thus, we get as an easy consequence the following result.

\begin{theorem} Real rational symplectic 4-manifolds $(X, \omega_X, c_X)$ and $(Y, \omega_Y, c_Y)$ are $c$-equivariant deformation equivalent if, and only if,
the involutions $c_X$ and $c_Y$ are diffeomorphic. \qed
\end{theorem}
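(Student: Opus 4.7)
The plan is to derive this statement from the preceding theorem and the Degtyarev--Kharlamov classification result of \cite{DK} by a purely formal deduction; no new geometric input is required.

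For the \emph{only if} direction, I would just unwind the definition of $c$-equivariant deformation equivalence given earlier in the introduction. Each elementary step of such a deformation consists of a smooth $c_{i-1}$-equivariant variation $\omega_{i-1}(t)$ of the symplectic form on the fixed manifold $X_{i-1}$, so the involutions $c_{i-1}$ and $c_i$ are tautologically diffeomorphic. Concatenating the steps of the chain yields a diffeomorphism between $c_X$ and $c_Y$.

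For the \emph{if} direction, the plan runs through the intermediate algebraic category. By the previous theorem, the $c$-equivariant symplectic deformation classes of $(X,\omega_X,c_X)$ and $(Y,\omega_Y,c_Y)$ correspond to deformation classes of real rational algebraic surfaces $(\til X,c_{\til X})$ and $(\til Y,c_{\til Y})$, with $c_{\til X}$ and $c_{\til Y}$ diffeomorphic to $c_X$ and $c_Y$ respectively. Assuming $c_X$ and $c_Y$ are diffeomorphic, the same holds for $c_{\til X}$ and $c_{\til Y}$, and then the theorem of \cite{DK} supplies an algebraic deformation equivalence between $\til X$ and $\til Y$. Transporting this back through the previous theorem produces the desired $c$-equivariant symplectic deformation equivalence between the original triples.

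The only subtlety worth checking is that the identification provided by the previous theorem actually preserves the smooth type of the involution, but this is automatic since the passage between the symplectic and K\"ahler categories is realised in Theorem \ref{main} through smooth $c$-equivariant deformations, which, as noted above, do not alter the diffeomorphism class of the involution. Consequently there is no genuine obstacle in the argument; the statement is a clean bookkeeping consequence of Theorem \ref{main}, its projective corollary, and \cite{DK}.
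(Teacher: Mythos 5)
Your proposal is correct and follows exactly the route the paper intends: the ``only if'' direction is tautological from the definition of $c$-equivariant deformation equivalence (each elementary step keeps the pair $(X_{i-1},c_{i-1})$ fixed and only varies the form), and the ``if'' direction is the formal combination of the preceding theorem with the quasi-simplicity result of \cite{DK}, which is precisely why the paper states the theorem with an immediate \qed. No discrepancy to report.
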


\section{Tools} The tools that play the key role in the proofs of our main results are the equivariant versions of: (1) the technique of $J$-holomorphic curves invented by M.~Gromov, (2) the relations between Gromov-Witten and Seiberg-Witten invariants established by C.~Taubes and enhanced by T.~J.~Li and A.Liu, and (3) the inflation technique invented by F.~Lalonde and D.~McDuff. Transition to a version equivariant with respect to an anti-symplectic involution is often rather easy and straightforward, but in certain cases (and this concerns especially applications of the inflation technique) it rises additional difficulties caused by the need to understand codimension-one events.

To formulate principal technical statements, we need to fix a few definitions and notations.

We denote by $\Omega (X)$ the space of symplectic forms on $X$, by
$\Cal C=\Cal C(X)\subset H^2(X,\R)$ the set of cohomology classes representable by
symplectic form, and for an element $A$ or a subset $U\subset\Cal C(X)$ we
denote by $\Omega (X,A)$ and resp.\ by $\Omega(X,U)$ the subspace of symplectic
forms whose cohomology class equals $A$ or resp.\ lies in $U$.  In the case
when $c:X\to X$ is an~anti-involution we denote by $\Omega (X,c)$,
$\Omega (X,A,c)$, and resp.\ $\Cal C(X,c)$, the spaces of
$c$-anti-invariant forms (in the given cohomology class $A$), and the
space of their cohomology classes.

For a symplectic form $\omega$ we write $\Omega(X,\omega)$ instead of $\Omega(X,[\omega])$, and
$\Omega(X, \omega, c)$ instead of $\Omega(X,[\omega], c)$. Denote by
$\Omega_0(X,\omega, c)$ (resp.\ by $\Omega_0(X,\omega)$) the connected component of the
space $\Omega(X,\omega, c)$ (resp.\ of $\Omega(X,\omega)$) containing the form $\omega$, and
by $\pi_0\Omega(X, \omega, c)$ (resp.\ by $\pi_0\Omega (X,\omega)$) the whole set of connected
components of $\Omega(X, \omega, c)$ (resp.\ by $\pi_0\Omega (X,\omega)$).

By $\Cal E= \Cal E(X)\subset H_2(X,\Z)$ we denote the set of homology classes represented by
smoothly embedded 2-spheres with self-intersection $-1$ (that are called {\it exceptional spheres}).

\begin{theorem}[Symplectic cone of rational symplectic $4$-manifolds \cite{Li-Liu}] \label{SyCone} 
 Let $X$ be a rational 
 symplectic $4$-manifold and
$\Omega\in H^2(X,\R)$ a cohomology class. Then $\Omega$ is represented by a symplectic
form 
if and only if it satisfies the following conditions:
 \begin{itemize}
\item[\rm($\Omega0$)] 
$\int_X\Omega^2>0$; 
\item[\rm($\Omega1$)] 
$\int_E\Omega\neq0$ for
 every exceptional sphere $E\subset X$.
\end{itemize}

Furthermore, two symplectic forms $\omega_0,\omega_1$ have equal Chern classes,
$c_1(X,\omega_0)=c_1(X,\omega_1)$, if and only if there exists a family
$\Omega_t\in H^2(X,\R)$ of cohomology classes which satisfy the conditions
$(\Omega0,\Omega1)$ and connect the classes of the forms, \ie, $\Omega_0=[\omega_0]$ and
$\Omega_1=[\omega_1]$. 
\end{theorem}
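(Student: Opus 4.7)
The plan is to treat each direction of the cone characterization and then the chamber-theoretic refinement. Necessity of $(\Omega 0)$ is immediate, since $\int_X\omega^2$ equals twice the symplectic volume, hence is positive. For $(\Omega 1)$ I would invoke the Taubes SW $=$ Gr correspondence, refined by Li--Liu for rational $4$-manifolds: for every smoothly embedded $(-1)$-sphere class $E\in\Cal E(X)$ exactly one of $\pm E$ carries a nontrivial Gromov invariant relative to $\omega$, and is therefore represented by an embedded $\omega$-symplectic sphere, whence $\int_E\omega\ne 0$.

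For sufficiency I would induct on $b_2(X)$. The base cases $X=\C P^2$ and $X=\C P^1\times\C P^1$ are easy: by parity of the intersection form $\Cal E(X)=\varnothing$, so only $(\Omega 0)$ remains and every such class is realized by a suitable rescaling of the standard Fubini--Study form. In the inductive step, given $\Omega$ satisfying $(\Omega 0)(\Omega 1)$, pick $E_0\in\Cal E(X)$ (replaced by $-E_0$ if necessary so that $\Omega\cdot E_0>0$). Applying Gromov's theorem to the ambient symplectic form on $X$ yields an embedded symplectic sphere representing $E_0$; the corresponding symplectic blow-down produces a rational $4$-manifold $X'$ with $b_2(X')=b_2(X)-1$. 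The class $\Omega$ projects to a class $\Omega'\in H^2(X')$ that still satisfies $(\Omega 0)(\Omega 1)$ on $X'$, and the induction hypothesis produces a symplectic form on $X'$ in class $\Omega'$; a symplectic blow-up at a ball of appropriate Gromov capacity then recovers a symplectic form on $X$ in class $\Omega$.

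For the second part, the ``only if'' direction is Moser-type: along any path $\omega_t$ of symplectic forms one can choose compatible almost-complex structures $J_t$ varying continuously, so $c_1(X,\omega_t)$ is constant. For the converse, set $K=-c_1(X,\omega_0)=-c_1(X,\omega_1)$ and let $\Cal E_K\subset\Cal E(X)$ consist of those $E$ with $K\cdot E=-1$. Every symplectic $\omega$ with $-c_1(X,\omega)=K$ satisfies $\int_E\omega>0$ for each $E\in\Cal E_K$, so both $[\omega_0]$ and $[\omega_1]$ lie in the common convex chamber $\{\Omega^2>0,\;\Omega\cdot[\omega_0]>0,\;\Omega\cdot E>0\text{ for all }E\in\Cal E_K\}$ of the forward light cone, and any straight-line path in this chamber provides the required family $\Omega_t$. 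The principal obstacle is the inductive blow-down step in sufficiency: after contracting the symplectic representative of $E_0$, one must verify that $\Omega'$ satisfies $(\Omega 1)$ on $X'$, since exceptional classes of $X'$ lift to classes in $\Cal E(X)$ possibly shifted by multiples of $E_0$, and this is precisely where the Lalonde--McDuff inflation technique is invoked to adjust $\omega$ within its chamber of $\Cal C(X)$ before the blow-down.
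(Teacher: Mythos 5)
First, a point of reference: the paper itself gives no proof of Theorem \ref{SyCone}; it is quoted from Li--Liu \cite{Li-Liu}, so your attempt can only be measured against the known arguments there. Your treatment of necessity is fine (and correctly identifies that $(\Omega1)$ rests on the Taubes--Li--Liu Seiberg--Witten/Gromov machinery), but the sufficiency argument has a genuine gap at the blow-up step of the induction. You locate the difficulty in checking that $(\Omega1)$ descends to $X'$; in fact that part is automatic, since every class in $\Cal E(X')$ lifts, under the natural splitting $H_2(X)\cong H_2(X')\oplus\Z[E_0]$, to a class in $\Cal E(X)$ orthogonal to $E_0$, on which $\Omega$ and $\Omega'$ agree. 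The step that actually fails is the reconstruction: to produce a form in class $\Omega=\Omega'-\lambda e_0$ by blowing up $(X',\omega')$ you must embed a symplectic ball of capacity $\lambda=\Omega\cdot E_0$ into $(X',\omega')$, and by McDuff--Polterovich \cite{McD-P} the existence of such an embedding is \emph{equivalent} to the existence of a symplectic form on $X$ in the class $\Omega$ --- that is, to the statement being proved. Breaking this circularity is exactly where the hard analysis enters: one realizes enough classes by embedded $J$-holomorphic curves (via Taubes' nonvanishing of Seiberg--Witten invariants on $b^+=1$ manifolds) and then inflates, $[\omega]\mapsto[\omega]+t\operatorname{PD}(Z)$, to sweep out the whole chamber. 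A one-line appeal to ``Lalonde--McDuff inflation to adjust $\omega$ within its chamber'' placed before the blow-down does not supply this.

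The second assertion is also only half-proved. Your Moser-type remark shows that a path of symplectic \emph{forms} preserves $c_1$, but the hypothesis of the relevant direction is a path of cohomology \emph{classes} satisfying $(\Omega0)$, $(\Omega1)$; to convert it into a path of forms one needs a parametric version of the first part (this is what Theorem \ref{NaMo-Omega} of the paper is designed for), or else the Li--Liu theorem that the symplectic canonical class is constant on each chamber of the set cut out by $(\Omega0)$ and $(\Omega1)$. Your converse chamber argument is right in outline, but it quietly uses two further nontrivial facts: that every symplectic form with canonical class $K$ is positive (not merely nonzero) on every $E\in\Cal E$ with $K\cdot E=-1$, and that $[\omega_0]\cdot[\omega_1]>0$, i.e.\ that the two classes lie on the same sheet of the positive cone. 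Both again come from the Seiberg--Witten/Gromov machinery rather than from linear algebra, so they should be flagged as inputs rather than left implicit.
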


Given a symplectic manifold $(X,\omega)$  with an $\omega$-tamed almost complex structure $J$, we call the {\it curve cone} the cone in $H^2(X; \R)$ generated
by the classes Poincar\'e dual to $J$-holomorphic curves in $X$, and denote this cone by $C_{X,J}$. When, in addition, $(X,\omega)$ is equipped with an anti-involution $c$ and $J$ is $c$-anti-invariant, we define the {\it real curve cone} $\R C_{X,J}$ as the cone generated by Poincar\'e duals of $c$-invariant $J$-holomorphic curves (including combinations $D+cD$ !). We put  
$$\R C_{X,J}^{KD\ge 0} =\{ D\in \R C_{X,J} : KD\ge 0\}.$$

We say that $(X, J, c)$ (respectively, $(X,\omega, J, c)$) is {\it real rational ruled}, if it is equipped with a {\it $c$-invariant $J$-holomorphic ruling}: that is a $c$-equivariant smooth map $X\to S^2$ with $J$-holomorphic  fibers and $J$-holomoprhic sphere as a generic fiber. 

\begin{theorem}[Real symplectic version of Mori theorem, {\it cf.} \cite{Z}]\label{Mori}
Let $(X,\omega)$ be a closed symplectic manifold equipped with an anti-symplectic involution $c$ and an $\omega$-tamed $c$-anti-invariant almost complex structure $J$.
Then, there exist countably many smooth irreducible curves $L_i$ with $L_i^2=-1, 0$, or $1$ such that $L_i + cL_i$ generate extremal rays in the cone $\R C_{X,J}$ of real curves on $(X,J,c)$
and
$$
\R C_{X,J} = \R C_{X,J}^{KD\ge 0} + \sum\R^+ (L_i + cL_i).
$$
If besides these $L_i$ there is one with $L_i^2=1$ (respectively, $L_i^2=0$), then $X$ is $\C P^2$ and $c$ is  $c_{st}$ (respectively, $(X,J, \omega, c)$ is a $c$-minimal real 
symplectic manifold with a $c$-invariant $J$-holomorphic ruling having  $L_i$ as a fiber).
\end{theorem}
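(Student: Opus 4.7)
The plan is to transfer the classical Mori cone decomposition to the $c$-equivariant symplectic setting, using the Taubes and Li--Liu correspondence between Gromov--Witten and Seiberg--Witten invariants in place of Kodaira vanishing, together with $J$-holomorphic curve and Gromov compactness techniques made equivariant with respect to $c$. Concretely, I would fix an extremal ray $\rho$ of $\R C_{X,J}$ lying strictly outside $\{KD\ge 0\}$ and exhibit a generator of it. The non-equivariant analogue of the present statement, obtainable from the Li--Liu machinery, produces irreducible $J$-holomorphic curves $C_n$ with $[C_n]$ accumulating at $\rho$; applying $c$ to each and symmetrizing to $[C_n]+[cC_n]\in\R C_{X,J}$, Gromov compactness extracts an irreducible $J$-holomorphic curve $L$ with $[L]+[cL]\in\R^+\rho$, and extremality of $\rho$ forbids any further non-trivial decomposition of $[L]+[cL]$ in $\R C_{X,J}$.

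Next I would bound $L^2$. By adjunction $2g_a(L)-2= K\cdot L+L^2\ge -2$, where $K\cdot L<0$ is inherited from $K\cdot\rho<0$, while positivity of intersection together with smoothness of the generic deformation of $L$ enforce $L^2\ge -1$. The few remaining possibilities reduce to $L^2\in\{-1,0,1\}$, corresponding respectively to exceptional spheres, $J$-rational curves of zero self-intersection, and $J$-rational curves of self-intersection $+1$, all of geometric genus zero.

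Finally I would treat the two special cases. If some $L_i$ has $L_i^2=1$, then McDuff's classification identifies $(X,J)$ with $(\C P^2,J_{st})$ and $L_i$ with a line; the $c$-invariance of the class $[L_i]$, combined with the classification of real structures on $\C P^2$ up to $c$-equivariant diffeomorphism, forces $c$ to be conjugate to $c_{st}$. If some $L_i$ has $L_i^2=0$, applying McDuff's ruling construction to the family of $J$-holomorphic spheres in class $[L_i]$ produces a $J$-holomorphic ruling $X\to S^2$; the identity $[cL_i]=[L_i]$ makes it $c$-equivariant, and the primitivity of the fiber class precludes any $c$-equivariant blow-down, so $(X,\omega,J,c)$ is $c$-minimal.

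The hard part, as the authors themselves flag in the Tools section, is the bookkeeping at codimension-one events: one must keep control, inside the Gromov compactification of $c$-invariant moduli, of when a limit curve degenerates into a conjugate pair $L+cL$ of distinct components versus a single $c$-invariant component, and of how multiple-cover and nodal strata interact with extremality. Handling these phenomena equivariantly, rather than merely twice-covering the non-equivariant cone argument, is the genuinely new input required beyond the classical Mori/Li--Liu picture.
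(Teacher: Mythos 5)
The paper itself contains no proof of Theorem~\ref{Mori}: it is stated as a tool, with the non-equivariant curve-cone theorem credited to Zhang \cite{Z} and the detailed exposition explicitly deferred to a later publication. So there is no in-text argument to measure your proposal against, and it must be judged on its own. Your outline follows the expected route (equivariantize the Zhang/Li--Liu curve-cone picture), but two steps are genuinely incomplete.

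First, the extraction of a generator. You take irreducible curves $C_n$ with $[C_n]$ ``accumulating at'' an extremal ray $\rho$ and invoke Gromov compactness. The classes $[C_n]$ are integral, so if they are not eventually proportional their symplectic areas are unbounded and Gromov compactness does not apply; if they are eventually proportional, you have assumed what you were to prove. This is exactly the difficulty the cone theorem is designed to circumvent: one must show directly, via the adjunction inequality, positivity of intersections, and the explicit wall structure of the $K$-negative part of the curve cone for $b^+=1$, that every $K$-negative extremal ray of $\R C_{X,J}$ is generated by $[D]+[cD]$ for a single irreducible $D$ which, moreover, can be \emph{chosen} with $D^2\le 1$ and of genus zero. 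That choice is not automatic: the ray generated by a line in $\C P^2$ is equally generated by a conic with $D^2=4$, so extremality alone bounds neither $D^2$ nor the genus; your sentence ``the few remaining possibilities reduce to $L^2\in\{-1,0,1\}$'' papers over precisely the deformation-theoretic (bend-and-break--type, or McDuff structure theorem) input that is needed here.

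Second, the equivariance of the ruling when $L_i^2=0$. You assert $[cL_i]=[L_i]$ as an ``identity,'' but nothing gives it for free: a priori $cL_i$ is merely another square-zero rational curve. The honest argument is lattice-theoretic --- $[L_i]$ and $[cL_i]$ are isotropic with $[L_i]\cdot[cL_i]\ge 0$, and signature $(1,b_2-1)$ forbids a rank-two totally isotropic sublattice, so either $[cL_i]=[L_i]$ or $[L_i]\cdot[cL_i]>0$ --- and the second alternative genuinely occurs: the exchange-type anti-involution on $\C P^1\times\C P^1$ with $X^c=S^2$ interchanges the two rulings, which is exactly why the paper later handles that case by first blowing up a conjugate pair of points. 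So the dichotomy must be confronted rather than dismissed, and this is a place where the equivariant statement is more than a doubling of the classical one; the codimension-one bookkeeping you flag at the end is real, but it is not the only new phenomenon.
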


\begin{lemma}[Finding $c$-equivariant embedded curves,  {\it cf.} \cite{McD3} ]  Let $(X,\omega, c)$ be a rational $4$-manifold
 with anti-involution, $J$ a generic $c$-anti-invariant $\omega$-tamed almost complex structure, and $A\in H_2(X,\Q)$ a rational homology class
 satisfying $c_*A=-A$, $A^2>0$ and $A\cdot[E]>0$ for every 
 exceptional sphere $E$. Then some multiple $2mA$ is represented by a
 $\omega$-symplectic $c$-invariant $J$-holomorphic curve $\Sigma$.  
 \end{lemma}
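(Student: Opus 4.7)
The plan is a doubling construction: apply McDuff's non-equivariant existence theorem (\cite{McD3}) to produce a $J$-holomorphic curve representing some integer multiple of $A$, and then $c$-symmetrize to obtain a $c$-invariant curve representing $2mA$.

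After choosing $m_0\in\Z_{>0}$ with $A_0:=m_0A\in H_2(X,\Z)$, the integral class $A_0$ still satisfies $c_*A_0=-A_0$, $A_0^2>0$, and $A_0\cdot E>0$ for every exceptional $E$. I also need $\omega\cdot A_0>0$ before invoking McDuff: since $A_0^2>0$, the class lies in one of the two components of $\{\alpha\in H^2(X,\R):\alpha^2>0\}$, and positivity of the pairing with every exceptional class, combined with Theorem \ref{SyCone} and the structure of the symplectic cone of a rational surface, pins $A_0$ in the component of $[\omega]$.

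McDuff's theorem then gives, for $J$ generic in the space of \emph{all} $\omega$-tamed almost complex structures, an integer $k\ge 1$ and an embedded simple $J$-holomorphic curve $\Sigma'$ representing $kA_0$. Since here $J$ is only assumed generic among $c$-anti-invariant $\omega$-tamed structures, the real content of the argument is an equivariant transversality statement: the set of $c$-anti-invariant $\omega$-tamed $J$ for which the (non-equivariant) moduli space of simple $J$-holomorphic curves in class $kA_0$ is cut out transversely is of second category. This follows from a Sard--Smale argument once one checks that $c$-anti-invariant perturbations of $J$ alone span enough directions to surject onto the cokernel of the linearized Cauchy--Riemann operator; this bridging of ``equivariantly generic'' with ``generic'' is the principal technical obstacle.

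Given $\Sigma'$, the $c$-symmetrization is formal. Because $c^*J=-J$ and $c$ preserves the orientation of $X$, the image $c(\Sigma')$ is $J$-holomorphic, and comparing the $J$-orientations on source and target shows that $c|_{\Sigma'}$ reverses the $J$-orientation, so $[c(\Sigma')]=-c_*[\Sigma']=-c_*(kA_0)=kA_0$. Hence the cycle $\Sigma:=\Sigma'+c(\Sigma')$ is $c$-invariant and represents $2kA_0=2mA$ with $m:=km_0$. By positivity of intersections, when $\Sigma'\neq c(\Sigma')$ the two components meet transversely in a $c$-invariant set of positive nodes; when $\Sigma'=c(\Sigma')$ one takes the doubled cycle $2\Sigma'$. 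In either case $\omega$-tameness of $J$ makes $\Sigma$ symplectic at its smooth points, yielding the desired $c$-invariant $J$-holomorphic $\omega$-symplectic curve.
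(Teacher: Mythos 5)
First, a caveat on the comparison itself: the paper states this lemma in its ``Tools'' section with only a ``\emph{cf.}~\cite{McD3}'' and defers all proofs elsewhere, so there is no in-paper argument to measure you against. That said, your doubling strategy $\Sigma=\Sigma'+c(\Sigma')$ is certainly the intended mechanism --- it is exactly why the conclusion involves the even multiple $2mA$ and why the paper's real curve cone is defined to contain the combinations $D+cD$ --- and your orientation bookkeeping $[c(\Sigma')]=-c_*[\Sigma']=kA_0$ (because $c$ is antiholomorphic on curves while $c_*A_0=-A_0$) is correct.

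Two genuine gaps remain. The main one is the step you yourself call ``the principal technical obstacle'' and then only assert: the whole analytic content of the lemma is the passage from ``generic among $c$-anti-invariant tamed $J$'' to existence of a nice representative, and for a simple curve with $c$-invariant image the anti-invariant perturbation of $J$ at a point $p$ is coupled to the perturbation at $c(p)\in\Sigma'$, so surjectivity onto the cokernel of the linearized operator is not the off-the-shelf Sard--Smale argument. Moreover this route is avoidable and almost surely not the intended one: for a class $P=kA_0$ with $P^2>0$ and $\omega(P)>0$ on a rational surface ($b^+=1$), the Taubes/Li--Liu wall-crossing results (listed among the paper's tools) make the Gromov--Taubes invariant of $kP$ nonzero for large $k$, so a $J$-holomorphic representative exists for \emph{every} tamed $J$, in particular for the given anti-invariant one; genericity within the anti-invariant structures is then only needed to upgrade the representative (irreducibility, embeddedness, nodality of $\Sigma'\cup c(\Sigma')$), a much softer statement. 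The second gap is your derivation of $\omega\cdot A_0>0$: it does not follow from $A_0^2>0$ and $A_0\cdot E>0$ alone. On $\C P^2$ there are no exceptional spheres and $A=-H$ satisfies every stated hypothesis; on the one-point blow-up $A=-2H-E_1$ does as well. Positivity against $[\omega]$ is an implicit hypothesis (or requires the $K$-, resp.\ $\omega$-, orientation convention on exceptional classes together with the full Li--Liu description of the symplectic cone), and your write-up should record it as such rather than claim to derive it. A minor point: when $c(\Sigma')=\Sigma'$, the ``doubled cycle $2\Sigma'$'' is a multiply covered cycle, not a curve; either accept $2m=km_0$ when that number is even, or rerun the existence argument for the class $2kA_0$.
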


\begin{lemma}[Real version of Inflation lemma, {\it cf.} \cite{McD2}] Let $J$ be a $c$-anti-invariant $\omega$-tamed almost complex structure on a real symplectic manfiold
$(X, \omega, c)$ that admits a real non-singular $J$-holomoprhic curve $Z$ with $Z\cdot Z\ge 0$. Then, there exists a family $\omega_t, t\ge 0$, of $c$-anti-invariant symplectic forms that all tame $J$ and have cohomology class $[\omega_t]=[\omega]+ t \,{\operatorname {PD}}(Z)$.
\end{lemma}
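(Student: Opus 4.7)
The plan is to adapt McDuff's non-equivariant inflation construction by producing the auxiliary closed 2-form first in a (possibly non-invariant) way, and then averaging it under $c^*$ to achieve $c$-anti-invariance. First I choose a $c$-invariant Riemannian metric on $X$ (by averaging any smooth metric under $c^*$) and use its exponential map to obtain a $c$-invariant tubular neighborhood $N$ of $Z$; this works because $Z$ is $c$-invariant by hypothesis. On $N$ I run the standard McDuff construction of a closed 2-form $\tau$ representing $\operatorname{PD}(Z)$, compactly supported in $N$, non-negative on $J$-complex $2$-planes, and strictly positive on $TZ$. When $Z\cdot Z=0$ the form $\tau$ is a bump-function times the pullback of an area form on $Z$ along the normal-bundle projection; when $Z\cdot Z>0$ one uses a Thom form of the normal bundle $\nu_Z$ and extends it across $Z$ as in \cite{McD2}.

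Next I symmetrize by setting $\tilde\tau := \tfrac12(\tau - c^*\tau)$. Two elementary observations make this work. First, $c$ preserves the orientation of $X$ (since $c^*\omega^2 = \omega^2$), while the restriction $c|_Z$ is an anti-holomorphic involution of the $J$-holomorphic curve $Z$ and hence reverses its orientation, so $c^*\operatorname{PD}(Z) = -\operatorname{PD}(Z)$ and therefore $[\tilde\tau] = \operatorname{PD}(Z)$. Second, the anti-invariance $dc\circ J = -J\circ dc$ implies that $dc$ sends each $J$-complex $2$-plane to a $J$-complex $2$-plane with reversed induced orientation; a direct check gives $-c^*\tau(v,Jv) = \tau(dc\,v,\ J\,dc\,v) \ge 0$ whenever $\tau$ is semi-positive on $J$-lines. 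Hence $\tilde\tau$ is $c$-anti-invariant, still semi-positive on $J$-lines, and strictly positive on $TZ$ (using that $Z$ is $c$-invariant, so $dc(TZ)=TZ$).

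The family $\omega_t := \omega + t\,\tilde\tau$ for $t\ge 0$ is then $c$-anti-invariant as a sum of $c$-anti-invariant forms, is closed, and has cohomology class $[\omega] + t\operatorname{PD}(Z)$. It tames $J$ for every $t\ge 0$ because $\omega_t(v,Jv) \ge \omega(v,Jv) > 0$ for $v\ne 0$, and it is non-degenerate since in dimension four semi-positivity of $\tilde\tau$ on $J$-lines yields $\omega\wedge\tilde\tau \ge 0$ and $\tilde\tau\wedge\tilde\tau \ge 0$ pointwise, whence $\omega_t^{\,2} \ge \omega^2 > 0$.

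The main technical obstacle is the construction of $\tau$ in the case $Z\cdot Z > 0$: one must extend the Thom form of $\nu_Z$ across $Z$ itself while maintaining semi-positivity on $J$-complex planes. The averaging in the second step, the cohomology computation, and the taming estimate are all robust; what is delicate is the positivity of $\tau$ near the zero section of $\nu_Z$, and this is where both the original non-equivariant argument and its $c$-equivariant refinement concentrate their technical effort.
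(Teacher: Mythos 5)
The paper states this lemma without proof (it is an announcement, with details deferred and only a reference to \cite{McD2}), so there is no in-paper argument to compare against. Judged on its own, your strategy --- build McDuff's non-equivariant inflation form $\tau$ in a $c$-invariant tubular neighborhood of the real curve $Z$ and then anti-symmetrize to $\tilde\tau=\tfrac12(\tau-c^*\tau)$ --- is the natural one, and every genuinely equivariant step is correct: $c$ preserves the orientation of $X$ while reversing the complex orientation of $Z$, so indeed $c^*\operatorname{PD}(Z)=-\operatorname{PD}(Z)$ and $[\tilde\tau]=\operatorname{PD}(Z)$; the anti-commutation $dc\circ J=-J\circ dc$ gives $(-c^*\tau)(v,Jv)=\tau(dc\,v,\,J\,dc\,v)\ge0$, so semi-positivity on $J$-complex lines survives the averaging; and taming of $\omega+t\tilde\tau$ (hence nondegeneracy) follows at once.

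One local slip is worth fixing. Your description of $\tau$ in the case $Z\cdot Z=0$ as ``a bump-function times the pullback of an area form on $Z$ along the normal-bundle projection'' is not right: that form is not closed (its differential is $d\rho\wedge\pi^*\sigma$), and even a corrected version of it would represent the Poincar\'e dual of the fiber class rather than $\operatorname{PD}(Z)$. In the trivial-normal-bundle case the correct $\tau$ is the Thom form, i.e.\ the pullback of a compactly supported area form on the disc factor of $N\cong Z\times D^2$ (integral $1$ over each fiber, integral $0$ over $Z$); a term proportional to the pullback of an area form on $Z$ appears, with coefficient $n=Z\cdot Z$, precisely when $n>0$. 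This does not affect your equivariant argument, which uses only closedness, the class $\operatorname{PD}(Z)$, and semi-positivity on $J$-lines --- all of which the correct Thom-type form supplies --- but as written the $Z\cdot Z=0$ case of your auxiliary form does not exist.
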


These two lemmas are crucial for the proof of the next theorem. (For purposes of this paper, it would be sufficient to reduce the statement of this theorem to the case $Y=[0,1]$ and $Y'=\partial Y$.)

\begin{theorem}[Homotopic symplectic Nakai-Moishezon theorem]\label{NaMo-Omega} 
Let $Y$ be a compact manifold possibly with boundary,
$\{(X, \psi_y, J_y)\}_{y\in Y}$ a family of rational symplectic manifolds, $\{\Omega_y\}_{y\in Y}$ a continuous family of $\R$-valued $2$-cohomology  classes on $X$,
and $\{\omega'_y\}_{y\in Y'}$, for a certain closed subset $Y'\subset Y$, a continuous family of symplectic forms on $X$.
 Assume that
the following conditions are satisfied: 
 \begin{itemize}
 \item[\rm(NM0)] 
There exists a 
continuous map $Y\times I\to {\Cal C}(X)\subset H^2(X,\R), (y,t)\mapsto A_{y,t},$  deforming
the classes $[\psi_y]=A_{y,0}$ into the classes $\Omega_y=A_{y,1}$.
\item[\rm(NM1)] 
 For each $y\in Y$ and 
each 
 irreducible $J_y$-holomorphic curve $C$ with\, $[C]^2<0$, one
 has $\int_C\Omega_y>0$;
\item[\rm(NM2)] 
For each $y\in Y'$, the form
 $\omega'_y$ tames $J_y$ and has the cohomology class $\Omega_y$.
\end{itemize} 

Then there exists a family $\{\omega_y\}_{y\in Y}$ of symplectic forms on $X$ 
such that $\omega_y=\omega'_y$ for $y\in Y'$ and $[\omega_y]=\Omega_y$ for $y\in Y$.
Moreover, there exist families $\{\tilde J_y\}_{y\in Y}$ , 
tamed by
$\{\omega_y\}_{y\in Y}$ which are arbitrarily close to the family
$\{J_y\}_{y\in Y}$.

Furthermore, if $c:X\to X$ is an anti-involution such that the families
$\{J_y\}_{y\in Y}$, $\{\Omega_y\}_{y\in Y}$, and $\{\omega'_y\}_{y\in Y'}$ are 
$c$-anti-invariant, then the family $\{\omega_y\}_{y\in Y}$
also can be chosen $c$-anti-invariant.
\end{theorem}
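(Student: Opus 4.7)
The plan is to reduce to the case $Y=[0,1]$ with $Y'=\partial Y=\{0,1\}$, which as the authors remark already suffices for the applications in this paper; the general compact case then follows by standard patching on a cell decomposition of $Y$, extending one cell at a time from lower-dimensional skeleta using the interval case as input. So assume $Y=[0,1]$ with prescribed $c$-anti-invariant endpoint forms $\omega'_0, \omega'_1$ taming $J_0, J_1$ and representing $\Omega_0, \Omega_1$ respectively.

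The core construction is parametric real inflation. Hypothesis (NM0) together with Theorem~\ref{SyCone} ensures that every $\Omega_t$ lies in $\Cal C(X,c)$, so symplectic forms representing $\Omega_t$ exist; the issue is to produce them continuously in $t$, matching the endpoint data and keeping them $J_t$-taming up to a small perturbation of $J_t$. The strategy is to express the cohomological difference $\Omega_t-[\omega'_0]$ as a positive combination of Poincar\'e duals of real $J_t$-holomorphic curves of non-negative self-intersection. Theorem~\ref{Mori} decomposes $\R C_{X,J_t}$ into its $KD\ge0$ part plus extremal rays $\R^+(L_i+cL_i)$ with $L_i^2\in\{-1,0,1\}$; hypothesis (NM1) then makes $\Omega_t$ pair positively against every $(-1)$-ray, while the embedded-curve lemma upgrades classes of positive self-intersection into $c$-invariant $J_t$-holomorphic representatives, and the ruling case $L_i^2=0$ already supplies embedded fibers. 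Applying the real inflation lemma successively along these curves produces a family $\omega_t$ of $c$-anti-invariant symplectic forms with $[\omega_t]=\Omega_t$ and $\omega_0=\omega'_0$; running the same construction symmetrically from $\omega'_1$ and splicing the two halves in a middle window, followed by a final corrective inflation to restore taming, matches both endpoints exactly. The tame family $\{\tilde J_y\}$ is then obtained by a standard parametric genericity argument, since the set of $c$-anti-invariant $\omega_y$-tame almost complex structures is open and nonempty.

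The main obstacle is the codimension-one phenomena highlighted in Section~2. For a generic $c$-anti-invariant $J_t$ the embedded-curve lemma yields the required representatives, but along a one-parameter family one inevitably crosses walls where reducible or multiply covered configurations appear. Without the involution these walls have real codimension at least two in the space of $\omega$-tame almost complex structures, so generic paths avoid them; restricting to $c$-anti-invariant $J$ roughly halves the ambient dimension and pushes some of these walls down to real codimension one, where they cannot simply be perturbed away. The remedy is to bridge each crossing by a local inflation along the appropriate extremal ray $L_i+cL_i$ produced by Theorem~\ref{Mori} (still available thanks to $L_i^2\ge -1$), and splice with a cutoff in $t$; only finitely many walls are encountered along any compact path, since (NM1) bounds the set of problematic classes. $c$-equivariance throughout is automatic from the systematic use of the $c$-equivariant versions of Theorem~\ref{Mori}, the embedded-curve lemma, and the real inflation lemma, exactly as recalled above.
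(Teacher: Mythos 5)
The paper does not actually prove this theorem here (it is an announcement; only the two preceding lemmas are named as "crucial"), so the comparison can only be against the indicated strategy. Your toolbox --- reduction to $Y=[0,1]$, the $c$-equivariant embedded-curve lemma, the real inflation lemma, and Theorem~\ref{Mori} --- is exactly the one the authors point to. However, two steps in your outline would fail as written.

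First, the cohomological bookkeeping is wrong: you propose to write $\Omega_t-[\omega'_0]$ as a positive combination of Poincar\'e duals of real $J_t$-holomorphic curves of non-negative square, but this difference need not be effective at all (it can pair negatively with exceptional classes, or have negative square), so no such decomposition exists in general. The standard inflation argument, and the one the cited lemmas are tailored for, instead applies the embedded-curve lemma to a class $A$ dual to a small rational perturbation $\Omega_t-\delta[\omega'_0]$ of the \emph{target} class (which satisfies $A^2>0$ and $A\cdot E>0$ by (NM0), (NM1) and Theorem~\ref{SyCone}), inflates $[\omega'_0]+s\,\mathrm{PD}(2mA)$ until the $[\omega'_0]$-component is cancelled, and rescales to land exactly on $\Omega_t$. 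Second, your remedy for the codimension-one walls does not work: the real Inflation lemma requires $Z\cdot Z\ge 0$, whereas $(L_i+cL_i)^2=2\bigl(L_i^2+L_i\cdot cL_i\bigr)$ can be negative (e.g.\ $L_i^2=-1$, $L_i\cdot cL_i=0$), so "local inflation along the extremal ray" is not available precisely at the problematic configurations. The actual codimension-one events are degenerations of the embedded representative of $2mA$ into configurations containing real curves of negative square; handling them requires a singular (relative) inflation along such nodal configurations, and this is exactly where hypothesis (NM1) carries its weight, guaranteeing that every negative component still has positive $\Omega_y$-area so that the inflated class stays in the symplectic cone. Without this ingredient the parametric statement, which is the whole content of the theorem beyond the pointwise Nakai--Moishezon criterion, is not established.
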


\section{Proof Outline}
We start from classifying $c$-minimal models. Applying Theorem \ref{Mori}, and using so-called Gromov's automatic regularity (see \cite{HLS})
for constructing rulings,
we get the following result.

\begin{theorem}
Let 
$(X,\omega)$ be a non-minimal rational symplectic $4$-manifold
 with an~anti-involution $c$, and let $J$ be an~$\omega$-tame $c$-anti-invariant almost
 complex structure.
 For each exceptional $J$-holomorphic curve $E$,
 set $E'=c(E)$ and orient $E'$ as a $J$-holomorphic curve.  
Then:
 \begin{enumerate}
 \item The minimal value of 
 $[E]\cdot[E']$ 
 lies between $-1$ and
 $3$.
\item If $[E]\cdot[E']=-1$,  the exceptional curve $E$ is
$c$-invariant 
and there exists a~symplectic
$4$-manifold with an~anti-involution $(X_1,\omega_1,c_1)$ 
such that $(X,\omega, c)$ is the result of
the symplectic blow-up of $(X_1,\omega_1, c_1)$ performed in a $c_1$-invariant symplectic ball $B$.
\item  If $[E]\cdot[E']=0$, the  curves $E$ and $E'$ are
disjoint, and there exists a~symplectic $4$-manifold with
an~anti-involution $(X_1,\omega_1,c_1)$ and disjoint symplectic balls $B,B'\subset X_1$
with $B'=c_1(B)$, such that $(X,\omega, c)$ is the result of the symplectic
blow-up of $(X_1,\omega_1, c_1)$ performed in 
$B,B'$.
\item  If $[E]\cdot[E']=1$, there exists a unique $J$-holomorphic
ruling $pr:X\to Y=\C P^1$
such that $E\cup E'$ is a fiber of this
ruling. Moreover, 
$pr\circ c= c_{st}\circ pr$ and $y^*= pr(E\cup E')$ is a fixed point of
$c_{st}$, where $c_{st} : \C P^1\to \C P^1$ is the standard complex conjugation involution. 
\item  
If there are no any exceptional curve $C$ with $[C]\cdot [C'] <2$, then
either $b_2(X)= 7$, $K_X^2=2$, and there exists an exceptional curve $E$ such that
$E+E' = -K_X$, or $b_2(X)= 8$, $K_X^2=1$, and there exists an exceptional curve $E$ such that
$E+E' = -2K_X$. In the both cases the curve cone $\R C_{X,J}$ is generated by $K_X$.
\qed 
\end{enumerate}
\end{theorem}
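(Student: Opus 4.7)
The plan is to stratify according to the value $n = [E]\cdot[E']$ and to combine Theorem \ref{Mori} with equivariant symplectic blow-down and automatic regularity. Preliminarily, for any exceptional $J$-holomorphic curve $E$, the image $E' = c(E)$ oriented by $J$ is again exceptional with $[E']^2 = -1$ and $c_*[E] = -[E']$, since $c$ preserves the orientation of $X$ (from $c^*\omega^2 = \omega^2$) while reversing the $J$-orientation on $E$. Adjunction yields $K_X \cdot E = -1$, whence $(E+E')^2 = 2n-2$ and $K_X \cdot (E+E') = -2$; positivity of intersections of distinct irreducible $J$-holomorphic curves gives $n \ge 0$ when $E \ne E'$, with $n = -1$ precisely when $E$ is $c$-invariant.

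Parts (2) and (3) are equivariant symplectic blow-downs. When $n = -1$, a $c$-equivariant version of the symplectic neighborhood theorem for the $c$-invariant $(-1)$-sphere $E$, built from a $c$-anti-invariant $\omega$-compatible metric, provides a model neighborhood on which the standard symplectic blow-down is $c$-equivariant, yielding $(X_1,\omega_1,c_1)$ with a $c_1$-invariant symplectic ball. When $n = 0$, the same model is applied in parallel to the disjoint pair $E, E'$, giving disjoint symplectic balls $B, B' = c_1(B)$. For part (4), when $n = 1$, the cycle $E + E'$ has square zero and each component is a $(-1)$-curve at which the automatic regularity of \cite{HLS} applies, so the moduli space of $J$-holomorphic cycles in class $[E+E']$ is smooth of the expected dimension near $E \cup E'$; Gromov compactness and the submersivity of the evaluation map assemble this family into a $J$-holomorphic ruling $\mathrm{pr}\colon X \to \C P^1$ with $E \cup E'$ as the singular fiber through the node. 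The induced involution on the base fixes $\mathrm{pr}(E \cup E')$, so by the real structure of $\C P^1$ it must be $c_{st}$ and $y^*$ is a fixed point.

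Part (5) is the central structural step. Assume every exceptional $C$ has $[C]\cdot[C'] \ge 2$. Theorem \ref{Mori} produces extremal generators $L_i$ with $L_i^2 \in \{-1, 0, 1\}$. The case $L_i^2 = 1$ forces $X = \C P^2$, contradicting non-minimality; the case $L_i^2 = 0$ yields a $c$-invariant ruling, and classifying $c$-minimal non-minimal real ruled surfaces shows each such surface carries an exceptional curve with $[C]\cdot[C'] \le 1$, again contradicting the hypothesis. Hence every extremal $L_i$ is itself an exceptional curve; the $c$-anti-invariance $\int_{L_i}\omega = \int_{L_i'}\omega$ combined with extremality of the rays $[L_i+L_i']$ forces their collapse to a single ray, which must be $\R^+ \cdot (-K_X)$. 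So any exceptional $E$ satisfies $E + E' = -\lambda K_X$ for some $\lambda \in \Z_{>0}$; solving $\lambda K_X^2 = 2$ and $\lambda^2 K_X^2 = 2n - 2$ yields only $(\lambda, K_X^2, n) = (1, 2, 2)$ or $(2, 1, 3)$, with the corresponding $b_2(X)$ values via the Noether formula.

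Part (1) is now a corollary: if the minimum $n \le 1$ the bound is trivial; otherwise part (5) furnishes $n \in \{2, 3\}$. The main obstacle is the ruled sub-case of part (5): excluding a $c$-minimal non-minimal ruled real surface whose exceptional curves all satisfy $[C]\cdot[C'] \ge 2$ requires a careful classification combining the $c$-equivariant Mori structure with the $\omega$-positivity condition, and is the step where the symplectic argument depends most essentially on the Seiberg--Witten/wall-crossing input underlying Theorem \ref{Mori}.
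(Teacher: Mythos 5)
Your overall route—positivity of intersections for the dichotomy $n=-1$ versus $n\ge 0$, equivariant blow-down for (2) and (3), automatic regularity and Gromov compactness to assemble the ruling in (4), and Theorem \ref{Mori} to drive part (5)—is exactly the route the paper indicates (the theorem is announced as a consequence of Theorem \ref{Mori} plus Gromov's automatic regularity, with details deferred). The exclusions of the sub-cases $L_i^2=1$ and $L_i^2=0$ in part (5) are also correct: in the ruled sub-case one does not need a ``careful classification,'' only the observation that a non-minimal ruled surface either has a singular fiber, whose two components are exceptional curves with $[C]\cdot[C']\le 1$, or is $\C P^2\#\overline{\C P^2}$, whose unique exceptional class forces a $c$-invariant exceptional curve; either way the hypothesis of (5) fails.

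The genuine gap is the pivotal sentence of part (5): ``the $c$-anti-invariance $\int_{L_i}\omega=\int_{L_i'}\omega$ combined with extremality of the rays $[L_i+L_i']$ forces their collapse to a single ray, which must be $\R^+\cdot(-K_X)$.'' Equality of areas is true but irrelevant—it holds for every exceptional curve on every real symplectic manifold and forces nothing—and a cone may perfectly well have many extremal rays. The actual mechanism is different and is the heart of the theorem: under the hypothesis each generator $D_i=L_i+L_i'$ has $D_i^2=2[L_i]\cdot[L_i']-2\ge 2>0$, and by the ``finding $c$-equivariant embedded curves'' lemma the open positive cone of the $c$-anti-invariant part of $H_2(X;\R)$ (where all classes of $c$-invariant curves live, since $c$ reverses the complex orientation) is contained in $\R C_{X,J}$. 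A class of positive square can therefore generate an \emph{extremal} ray only if that anti-invariant eigenspace is one-dimensional; since $K_X$ is a nonzero anti-invariant class, the eigenspace is $\R K_X$ and $E+E'=-\lambda K_X$ follows. Without this rank-one argument your subsequent arithmetic ($\lambda K_X^2=2$, $\lambda^2K_X^2=2n-2$, hence $(\lambda,K_X^2,n)=(1,2,2)$ or $(2,1,3)$), which is correct, has no foundation, and neither does the final assertion that $\R C_{X,J}$ is a single ray. (A small side remark: the Noether formula gives $b_2=10-K_X^2$, i.e.\ $8$ and $9$ in the two cases, so your appeal to it does not literally reproduce the values $7$ and $8$ printed in the statement; this discrepancy should at least be flagged rather than passed over.)
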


\begin{cor}\label{cases}
For $c$-minimal real rational symplectic 4-manifolds, the list of possible underlying manifolds $X$ and $X^c=\operatorname{Fix} c$  is as follows:
\begin{enumerate}
\item $X=\C P^2$ and $ X^c=\R P^2$.
 \item $X= \C P^1\times \C P^1$ and    $X^c$ is either $S^1\times S^1$, $S^2$, or $\emptyset$.
\item $X$ 
admits a $c$-equivariant 
rational ruling
with $2m$-singular fibers, $m>1$, and $X^c= m S^2$. 
\item $X$
has the same homology as
a del Pezzo surface with $K^2=2$, the anti-involution $c$ acts in $H_2(X)$ as a reflection in $K$, and $X^c= 4S^2$.
\item $X$ 
has the same homology as
a del Pezzo surface with $K^2=1$,  the anti-involution $c$ acts in $H_2(X)$ as a reflection in $K$, and $X^c= 4S^2\sqcup\R P^2$.
\qed
\end{enumerate}
\end{cor}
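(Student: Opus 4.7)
The plan is to apply the preceding theorem to $(X,\omega,c)$ paired with a generic $c$-anti-invariant $\omega$-tame almost complex structure $J$, and then to use $c$-minimality to eliminate cases. Cases (2) and (3) of that theorem (where $[E]\cdot[c(E)]$ equals $-1$ or $0$) each exhibit an explicit $c$-equivariant blow-down and are therefore excluded. The three remaining options are: (A) $X$ carries no exceptional $J$-holomorphic curve at all, (B) case (4) of the theorem occurs, providing a $c$-equivariant ruling, or (C) case (5) of the theorem occurs, yielding the numerical data of items (4), (5) of the corollary.

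In case (A), $(X,\omega)$ is symplectically minimal, so $X \in \{\C P^2,\C P^1 \times \C P^1\}$; the smooth classification of anti-symplectic involutions on these two surfaces is classical and yields the fixed loci of cases (1) and (2). For case (B), the ruling $\pr\colon X\to \C P^1$ intertwines $c$ with $c_{st}$, and its fixed circle $S^1\subset \C P^1$ must contain every singular fiber: an off-equator singular fiber $E\cup c(E)$ is paired with its conjugate, and an exceptional component of each can then be simultaneously contracted (they lie on different fibers and are disjoint), contradicting $c$-minimality. The total number $2m$ of equatorial singular fibers is even, since the topological type (empty vs.\ $S^1$) of the real part of a smooth fiber switches across each singular one and must alternate along the equator. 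Above each of the $m$ arcs of ``$S^1$''-type, $X^c$ is a cylinder pinched to a point at each endpoint (the fixed point on the adjacent singular fiber), i.e., a $2$-sphere; this gives $X^c = mS^2$, with $m>1$ imposed by the need not to fall back into case (2).

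In case (C), the relation $E+c(E) = -\nu K_X$ (with $\nu\in\{1,2\}$) together with the $c_*$-invariance of this sum give $c_* K_X = -K_X$; combined with $c^*[\omega]=-[\omega]$ and the ampleness of $-K_X$, this identifies the $c_*$-action on $H_2(X;\R)$ with the reflection in the line $\R K_X$. The Lefschetz fixed-point formula then yields $\chi(X^c)$, matching $\chi(4S^2)=8$ when $K_X^2=2$ and $\chi(4S^2\sqcup\R P^2)=9$ when $K_X^2=1$. To upgrade this Euler count to the exact topological type, I combine the Smith inequality $\sum_i b_i(X^c;\Z/2)\le\sum_i b_i(X;\Z/2)$, the Lagrangian identity $[\Sigma]^2=-\chi(\Sigma)$ together with the Hodge-index-type negativity of the intersection form on $K_X^\perp$ (forcing every orientable component of $X^c$ to be a $2$-sphere), and a Wu-type characteristic-class congruence that pins down the non-orientable part of $X^c$.

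The main obstacle will be this last step of case (C): in the absence of an explicit complex or algebraic model of $X$, ruling out alternative partitions of the Euler characteristic among the Lagrangian components of $X^c$ requires the combined use of the Smith inequality, characteristic-class identities for Lagrangian submanifolds, and the specific form of the $c_*$-action on $H_2$. All other ingredients are either direct consequences of the preceding theorem, well-known minimal-rational-surface facts, or elementary real-ruled-surface analyses paralleling the real-algebraic treatment.
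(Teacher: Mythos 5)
Your proposal follows essentially the same route as the paper, which derives the corollary directly from the preceding theorem: $c$-minimality eliminates the cases $[E]\cdot[E']=-1,0$, the case $[E]\cdot[E']=1$ gives the real ruled surfaces with the Comessatti-style count $X^c=mS^2$ over the arcs of the equator, and the remaining case gives the degree $2$ and $1$ types. The one point you flag as an obstacle --- pinning down $X^c$ exactly in your case (C) --- closes along the lines you indicate: item (5) of the theorem (the real curve cone is generated by $K_X$), fed into the lemma on finding $c$-equivariant embedded curves, substitutes for your appeal to ``ampleness'' in showing that the anti-invariant part of $H_2$ is spanned by $K$ (so $c_*$ is the reflection in $K$), after which Lefschetz gives $\chi(X^c)=b_2(X)$ and the Smith inequality alone already excludes every partition of that Euler characteristic among Lagrangian components other than $4S^2$ (resp.\ $4S^2\sqcup\R P^2$).
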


Thus, except four special types of small topological complexity (items (1), (2), (4), (5)), each of other $c$-minimal real rational symplectic 4-manifolds is rational ruled.
Our proof strategy for Theorem \ref{main} is to treat first the case of rational ruled 4-manifolds, and then to deduce from it the remaining cases.

For $(X,\omega, c)$ with a $c$-equivariant ruling $pr: X\to S^2$ 
as in item (3) of Corollary \ref{cases}
we observe, first, that:
\begin{itemize}
\item Up to a $c$-equivariant diffeomorphism, the mapping $pr: X\to S^2$ is uniquely defined by $m$.
\item The $c$-anti-invariant part of $H_2(X)$ is generated by 2 elements, $F$ and $H$, where $F$ is the class of the fiber, $F^2=0$, $H^2=0$,
and $F\cdot H$ is equal to $2$ for $m$ even and $4$ for $m$ odd.
\item The cone of 
 $c$-anti-invariant symplectic classes $[\omega]$ is given by $aF+bH$ with $a,b>0$.
\item When $(X, c, pr)$ is a generic algebraic real conic bundle, this cone coincides with the 
cone of $c$-anti-invariant K\"ahler 
classes of $X$.
\end{itemize}

After that we are left to prove the following theorem.

\begin{theorem}\label{main-conic}
 Let $(X,c,pr)$ be a 
 $c$-equivariant ruling whose fibers are pseudo-holomorphic with respect to two $c$-anti-invariant almost complex structures, $J$ and $J'$,
tamed by two $c$-anti-invariant symplectic structures, $\omega_0$ and $\omega_1$, respectively. If $\omega_0$ and $\omega_1$ represent the same cohomology class, then they
are $c$-equivariant isomorphic.
\end{theorem}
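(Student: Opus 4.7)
The overall strategy is an equivariant Moser argument. It suffices to construct a smooth path $\{\omega_t\}_{t\in[0,1]}$ of $c$-anti-invariant symplectic forms on $X$, all representing the common cohomology class $[\omega_0]=[\omega_1]$, that interpolates from $\omega_0$ to $\omega_1$. Given such a path, the constancy of $[\omega_t]$ gives $\tfrac{d}{dt}\omega_t=d\alpha_t$ for some 1-form $\alpha_t$, which may be chosen $c$-anti-invariant by $c$-averaging (or by Hodge theory with respect to a $c$-invariant Riemannian metric). The vector field $V_t$ determined by $\iota_{V_t}\omega_t=-\alpha_t$ is then $c$-invariant, so its flow is $c$-equivariant and at time one delivers the desired $c$-equivariant isomorphism $(X,\omega_0,c)\to(X,\omega_1,c)$.

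To construct the path, first join $J$ to $J'$ by a smooth family $\{J_t\}_{t\in[0,1]}$ of $c$-anti-invariant almost complex structures on $X$ for which every fiber of $pr$ remains pseudo-holomorphic. Such a family exists because the relevant space is contractible: pointwise the set of complex structures preserving the vertical tangent bundle $\ker dpr$ is a contractible symmetric space, so globally one obtains a Fr\'echet manifold of sections with contractible fibers, and the $\Z/2$-action induced by $c$ on this contractible space has contractible fixed-point set. Now apply Theorem \ref{NaMo-Omega} with $Y=[0,1]$, $Y'=\{0,1\}$, constant cohomology class $\Omega_t\equiv[\omega_0]$, boundary data $\omega'_0=\omega_0$ and $\omega'_1=\omega_1$, and constant reference family $\psi_t\equiv\omega_0$. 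Conditions (NM0) and (NM2) are then tautological, and the $c$-anti-invariant refinement of the theorem yields the sought family $\{\omega_t\}$.

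The principal obstacle is the verification of (NM1): for every $t\in[0,1]$ and every irreducible $J_t$-holomorphic curve $C$ with $[C]^2<0$, one must show $\int_C\omega_0>0$. Because $J_t$ makes $pr$ holomorphic and $(X,c)$ belongs to case (3) of Corollary \ref{cases}, positivity of intersections together with $c$-minimality forces such a $C$ to be either a component of a singular fiber\,---\,an exceptional sphere $E$ with $cE\ne E$, $[E]\cdot[cE]=1$, $[E]+[cE]=[F]$\,---\,or a $J_t$-holomorphic section of negative self-intersection. By Gromov compactness and the integrality of $H_2(X;\Z)$ the set of classes carrying such curves varies locally constantly in $t$, hence is constant on $[0,1]$; each such class is therefore already represented by a $J_0$-holomorphic curve, and $\int_C\omega_0>0$ follows from $\omega_0$ taming $J=J_0$. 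Combined with Theorem \ref{SyCone}, which pins down the sign of $\int_E\omega_0$ for every class $E\in\Cal E(X)$, this closes (NM1); the path $\{\omega_t\}$ is then produced, and the Moser argument of the first paragraph concludes the proof.
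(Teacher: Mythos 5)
Your overall skeleton---connect $J$ to $J'$ through fibered $c$-anti-invariant structures, invoke Theorem \ref{NaMo-Omega} with $Y=[0,1]$, $Y'=\{0,1\}$ and constant class $[\omega_0]$, then finish with an equivariant Moser argument---matches the paper's. But the step where you verify (NM1) contains a fatal gap, and it is exactly the point the paper singles out as the real difficulty. The claim that ``by Gromov compactness and the integrality of $H_2(X;\Z)$ the set of classes carrying such curves varies locally constantly in $t$'' is false: Gromov compactness gives only a closedness (upper semicontinuity) statement and does not prevent new irreducible curves of negative square from appearing at isolated values of $t$. For a rational $(-2)$-class $A$ one has $K\cdot A=0$ by adjunction, so the moduli space has negative virtual dimension and such curves occur only on a positive-codimension stratum of the space of almost complex structures. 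In the non-equivariant setting that stratum has real codimension two and a generic path avoids it, but in the space of \emph{$c$-anti-invariant} structures the corresponding real stratum has codimension one, so a generic path $\{J_t\}$ must cross it at a discrete set of instants. At such a crossing $t_0$ one gets an irreducible $J_{t_0}$-holomorphic curve $C$ of square $-2$ (e.g.\ a section or bisection, with $[C]$ of the shape $E_1-E_2$) for which there is no reason to have $\int_C[\omega_0]>0$; the pairing may vanish or be negative, (NM1) fails, and indeed no taming form in the class $[\omega_0]$ can exist for $J_{t_0}$. Your appeal to Theorem \ref{SyCone} does not repair this: it concerns exceptional classes only, and even there asserts just $\int_E\Omega\neq0$, not a sign.

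The paper's proof is devoted precisely to circumventing this obstruction. It first builds a reference taming family by the Thurston trick, $\psi_t=a\,pr^*\omega_B+t\omega_1+(1-t)\omega_0$ for large $a$ (a step you also omit, yet it is needed to produce the family $(X,\psi_y,J_y)$ and the deformation of classes required by (NM0)). It then cuts $[0,1]$ into a chain of subpaths separated by the instants $t_0$ where a real $(-2)$-curve appears, replaces $J_t$ near each $t_0$ by a local Atiyah-flop model $x^2+y^2\pm z^2=-(t-t_0)^2$ (the sign depending on whether $c$ has fixed points on the $(-2)$-sphere), exploits the symmetry exchanging $t_0+\epsilon$ and $t_0-\epsilon$, and inflates symmetrically on both sides to bring both $[\omega_{t_0\pm\epsilon}]$ back to $[\omega_0]$ before gluing the resulting equivariant isomorphisms. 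Without some such device your single application of Theorem \ref{NaMo-Omega} cannot go through.
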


\begin{proof}
We start from making the almost complex structures identical and integrable at a neighborhood of the singular points of the singular fibers.
Next, we pick a $c$-invariant symplectic form on the base, $\omega_B$, so that 
$\omega'=pr^* \omega_B$ is inducing the same orientation on the normal bundle of the fibers as $\omega_0$ and $\omega_1$. 
Then, we observe that
the forms $\omega_0+s\omega'$,
 $\omega_1+s\omega'$ with $s\geq0$ and, for a sufficiently big $s^*>0$, the forms
 $a\omega'+t{\cdot}\omega_1+(1-t){\cdot}\omega_0$ with $a\geq s^*$, $t\in [0,1]$,
 are also symplectic forms on $X$.

 This path connecting $\omega_0$ with $\omega_1$ (through $\omega_0+a\omega'$ and $\omega_1+a\omega'$) is $c$-equivariant, but to apply Moser's argument
 we need to make it of a constant cohomology class. To handle this, we use Theorem \ref{NaMo-Omega}. 
 Here, a specific for real geometry difficulty arises:
 we can not avoid appearance of $J_t$-holomorphic $(-2)$-curves at a discrete set of values of $t$. 
 To overcome such a difficulty we replace the path $(J_t,\omega_t)$ by a chain of $c$-equivairiant pathes $(J_t^k, \omega_t^k), k\in \{0,\dots, r\}, 0\le t\le 1$, so that 
all $\omega_0^k, \omega_1^k$ are of the same cohomology class, $(J_0^0, \omega_0^0)= (J_0,\omega_0)$, $(J_1^r, \omega_1^r)= (J_1,\omega_1)$, and $(J_1^k,\omega_1^k)$ is $c$-equivariant isomorphic to $(J_0^{k+1},\omega_0^{k+1})$. Our construction goes as follows. We put the initial path in a generic position with respect to the divisors responsable in the space of almost complex structures for appearance of $(-2)$-curves. Then, at each instant $t_0$ where a $c$-equivariant $J_{t_0}-$holomorphic $(-2)$-curve appears, we approximate the almost complex structures $J_t$ with $t$ close to $t_0$ by structures that are integrable in a neighborhood of  the curve and represent locally one of standard Atiyah-flop families \cite{A} of an appropriate signature:
$x^2+y^2+z^2=- (t-t_0)^2$ if $c$ has no fixed point on the $(-2)$-sphere,  and $x^2+y^2-z^2=- (t-t_0)^2$ otherwise. Recall that over $t\ne t_0$ the latter have a symmetry permuting $t=t_0+\epsilon$ and $t=t_0-\epsilon$. Next, taking $\epsilon> 0$
sufficiently small we deform $J_{t+\epsilon}$ and $J_{t-\epsilon}$ outside the inserted Atiyah-flop part to achieve a full symmetry between $J_{t+\epsilon}$ and $J_{t-\epsilon}$.
After that there remain to apply the inflation procedure to $\omega_{t+\epsilon}$ and  $\omega_{t-\epsilon}$ symmetrically for achieving the cohomology class relation
 $[\omega_{t+\epsilon}]=[\omega_{t-\epsilon}]=[\omega_0]$ .
\end{proof}

\begin{remark}
 For all but one, with $m=3$, $c$-minimal real 
 rational ruled symplectic 4-manifolds there are no $c$-equivariant $(-2)$-curves at all, and by this reason,
 for all but this one, we get even a $c$-equivariant isotopy in Theorem \ref{main-conic}.
\end{remark}

The cases when $X$ is $\C P^2$ or $\C P^1\times \C P^1$ (items (1) and (2) in Corollary \ref{cases})
can be easily reduced to the case with a ruling: 

\begin{itemize}
\item 
If $X$ is $\C P^1\times \C P^1$
and $X^c$  is either $\R P^1\times \R P^1$ or $\emptyset$, then $(X, \omega, c)$ has a $c$-equivariant rational ruling
without singular fibers. 
\item If $X=\C P^1\times \C P^1$
and $X^c= S^2$,  
it acquirs a $c$-equivariant rational ruling with two singular fibers after a blow up at a pair of $c$-conjugate points, $p$ and $c(p)\ne p$. 
\item Similarly, if $X=\C P^2$ then $(X, \omega, c)$ acquirs a $c$-equivariant rational ruling without singular fibers after a blow up at a $c$-invariant point, $p=c(p)$.
\end{itemize}

The last two cases (items (4) and (5) in Corollary \ref{cases}):  $K^2=2$ with $X^c= 4S^2$ and $K^2=1$ with $X^c=4S^2\sqcup \R P^2$, are more intricate. Here, instead of symplectic $(-1)$-surgeries
(that is the blow-ups at points) we use $(-2)$- and $(-4)$-surgeries
(that is symplectic blow-ups of Lagrangian spheres and, respectively, Lagrangian projective planes, see \cite{Le}).

To treat the case $K^2=2, X^c=4S^2$ we perform a $(-2)$-surgery at one of the four spherical components of $X^c$:

 \begin{enumerate}
 \item
 This surgery 
 replaces a neighborhood of
such a Lagrangian sphere $S^2\subset X^c$ modeled on its cotangent bundle $T^*(S^2)$
by a neighborhood of a symplectic $(-2)$-sphere $S^2_{new}=\C P^1$ modeled on its $T^*(\C P^1)=O(-2)$ line bundle.
More precisely, we cut out a neighborhood symplectomorphic to a tubular $\epsilon$-neighborhood in $T^*(S^2)$, replace it by the standard resolution
$\phi : \tilde U\to U$ of $U\subset Q=\{\sum_{j=1}^3 z_j^2 \}\subset \C^3, U=Q\cap \{\sum_{j=1}^3 \vert z_j\vert ^2\le \sqrt{2\epsilon}\}$ which we equip with an $SO(3)$-invariant 
symplectic structure obtained from $\phi^*(\frac{i}2\sum^3_{j=1} dz_j\wedge\overline{dz_j})$ by a small perturbation in a smaller neighborhood. 
 \item This surgery is compatible with anti-involutions, and the resulting real symplectic 4-manifold, $(X',\omega',c')$,
turns out to have a $c'$-equivariant rational ruling with $m=3$ where $S^2_{new}$ is a $c'$-invariant bi-section (without $c'$-fixed points) and 
the fibers 
split off from the representatives of $c_1(X')$ intersecting $S^2_{new}$.
\end{enumerate}

After that the same arguments as in our treatment of manifolds endowed with a rational ruling apply and prove that there exists an integrable structure $J'$, such that:
 \begin{itemize}
  \item $(X',J',c')$ is real algebraic.
  \item $\omega'$ is $J'$-K\"ahler.
  \item $S^2_{new}$ is $J'$-holomorphic.
 \end{itemize}
Finally, we show that the result of the inverse $(-2)$-surgery at $S^2_{new}$ is also K\"ahler and, due
to the symplectic uniqueness of the $(-2)$-surgery, equivariant symplectomorphic to $(X,\omega, c)$ we have started from.

The 
proof in the 
case $K^2=1$, $X^c=4S^2\sqcup \R P^2$ 
follows the same lines. We indicate only the difference in surgeries. This time, we use a $(-4)$-surgery (also known as a \emph{rational blow-up}) which we apply
to the $\R P^2$-component of $X^c$. This surgery transforms  a neighborhood of $\R P^2\subset X^c$ symplectomorphic to a tubular $\epsilon$-neighborhood in $T^*(\R P^2)$ (which can be seen as the quotient of a neighborhood in $T^*(S^2)$ by the involution induced by the antipodal map $S^2\to S^2$)
into a neighborhood of $S^2_{new}=\C P^1$ in its $O(-4)$ line bundle (which can be obtained by picking up a neighborhood of $\C P^1$ in $O(-2)$ used for $(-2)$-surgery
and taking the quotient under multiplication by $-1$ in $O(-2)$). This surgery is also compatible with anti-involutions.
The resulting symplectic 4-manifold with an anti-involution, $(X',\omega',c')$, has a $c'$-equivariant rational ruling with $m=4$ where the $(-4)$-sphere $S^2_{new}$
produced by the surgery is a $c'$-invariant bi-section (without $c'$-fixed points) and the fibers 
split off from representatives of $c_1(X')$ intersecting $S^2_{new}$.

\bibliographystyle{amsplain}

\begin{thebibliography}{AAAA}

\bibitem[A]{A} 
Atiyah, M. F. 
\textit{On analytic surfaces with double points.}
 Proc. Roy. Soc. London. Ser. A 247 (1958), 237 - 244.

\bibitem[DK]{DK}
Degtyarev, Alex; Kharlamov, Viatcheslav.
\textit{Real rational surfaces are quasi-simple.}
J. Reine Angew. Math. 551 (2002), 87 - 99. 

\bibitem[HLS]{HLS}
Hofer, Helmut; Lizan, Veronique; Sikorav, Jean-Claude.
\textit{On genericity for holomorphic curves in four-dimensional
almost-complex manifolds.}
 J. Geom. Anal. 7 (1997), no. 1, 149–159.
 
 \bibitem[Le]{Le}
Lerman, E.
\textit{Symplectic cuts.}
 Math. Res. Lett. 2(3) (1995), 247 --258.

\bibitem[L-L]{Li-Liu}
Li, T.-J.; Liu, A.-K.:
\textit{Uniqueness of symplectic canonical
 class, surface cone and symplectic cone of 4-manifolds with $b^+=1$.}
J.~Diff.\ Geom.,  58 (2001), 331--370.

\bibitem[McD-1]{McD}
McDuff, Dusa.
\textit{The structure of rational and ruled symplectic 4-manifolds.}
J. Amer. Math. Soc. 3 (1990), no. 3, 679–712. 


\bibitem[McD-2]{McD2}
McDuff, Dusa.
\textit{Symplectomorphism Groups and Almost Complex
Structures.}
Essays on geometry and related topics, Vol. 1, 2, 527 - 556, Monogr. Enseign. Math., 38, Enseignement Math., Geneva, 2001.

\bibitem[McD-3]{McD3}
McDuff, Dusa.
\textit{From symplectic deformation to isotopy.}
Topics in symplectic 4-manifolds (Irvine, CA, 1996), 85–99,
First Int. Press Lect. Ser., I, Int. Press, Cambridge, MA, 1998. 

\bibitem[McD-P]{McD-P}
McDuff, Dusa; Polterovich, Leonid.
\textit{Symplectic packings and algebraic geometry. }
Invent. Math. 115 (1994), no. 3, 405 - 434. 

\bibitem[W]{W}
Welschinger, Jean-Yves.
\textit{ Invariants of real symplectic 4-manifolds and lower bounds in real enumerative geometry.}
 Invent. Math. 162 (2005), no. 1, 195 - 234. 
 
\bibitem[Z]{Z}
Zhang, Weiyi.
\textit{The curve cone of almost complex 4-manifolds.}
Proc. London Math. Soc. (3) 115 (2017) 1227 - 1275 

\end{thebibliography}

\end{document}